\newtheorem{theorem}{Theorem}
\newtheorem{lemma}{Lemma}
\newtheorem{remark}{Remark}
\newcommand{\comment}[1]{}
\title{\LARGE \bf
Interval observer for uncertain time-varying SIR-SI epidemiological model of vector-borne disease*}
\author{Maria Soledad Aronna$^{1}$ and Pierre-Alexandre Bliman$^{2}$
\thanks{*This work was supported by Inria, France and CAPES, Brazil (processo 99999.007551/2015-00), in the framework of the STIC AmSud project MOSTICAW. This investigation was also supported by Funda\c c\~ao Get\'ulio Vargas in the framework of the Project of Applied Research entitled ``Controle da Dengue atrav\'es da introdu\c{c}\~ao da bact\'eria {\em Wolbachia}''}% <-this % stops a space
\thanks{$^{1}$Maria Soledad Aronna is with Escola de Matem\'atica Aplicada, Funda\c c\~ao Get\'ulio Vargas, Rio de Janeiro - RJ, Brazil
        {\tt\small soledad.aronna@fgv.br }}%
        \thanks{$^{2}$Pierre-Alexandre Bliman is with Sorbonne Universit\'e, Universit\'e Paris-Diderot SPC, CNRS, Inria, Laboratoire Jacques-Louis Lions, \'equipe Mamba, Paris, France
        %Sorbonne Universit\'es, Inria, UPMC Univ. Paris 06, Lab.\ J.L.\ Lions UMR CNRS 7598, Paris, France
         and Escola de Matem\'atica Aplicada, Funda\c c\~ao  Get\'ulio  Vargas, Rio de Janeiro - RJ, Brazil
        {\tt\small pierre-alexandre.bliman@inria.fr}}%
}
\begin{document}

\maketitle
\thispagestyle{empty}
\pagestyle{empty}

\def\XOBSa{X_{\text{\eqref{OBS1}}}}
\def\dXOBSa{\dot{X}_{\text{\eqref{OBS1}}}}
\def\AOBSa{A_{\text{\eqref{OBS1}}}}
\def\bOBSa{b_{\text{\eqref{OBS1}}}}
\def\VOBSa{V_{\text{\eqref{OBS1}}}}
\def\dVOBSa{\dot{V}_{\text{\eqref{OBS1}}}}
\def\uOBSa{u_{\text{\eqref{OBS1}}}}
\def\deltaOBSa{\delta_{\text{\eqref{OBS1}}}}
\def\FOBSa{F_{\text{\eqref{OBS1}}}}
\def\rhoOBSa{\rho_{\text{\eqref{OBS1}}}}
\def\omOBSa{\omega_{\text{\eqref{OBS1}}}}
\def\xiOBSa{\xi_{\text{\eqref{OBS1}}}}
\def\epsOBSa{\varepsilon_{\text{\eqref{OBS1}}}}

\def\XOBSb{X_{\text{\eqref{OBS2}}}}
\def\dXOBSb{\dot{X}_{\text{\eqref{OBS2}}}}
\def\AOBSb{A_{\text{\eqref{OBS2}}}}
\def\bOBSb{b_{\text{\eqref{OBS2}}}}
\def\VOBSb{V_{\text{\eqref{OBS2}}}}
\def\dVOBSb{\dot{V}_{\text{\eqref{OBS2}}}}
\def\uOBSb{u_{\text{\eqref{OBS2}}}}
\def\deltaOBSb{\delta_{\text{\eqref{OBS2}}}}
\def\FOBSb{F_{\text{\eqref{OBS2}}}}
\def\rhoOBSb{\rho_{\text{\eqref{OBS2}}}}
\def\omOBSb{\omega_{\text{\eqref{OBS2}}}}
\def\xiOBSb{\xi_{\text{\eqref{OBS2}}}}
\def\epsOBSb{\varepsilon_{\text{\eqref{OBS2}}}}

\def\cR{{\cal R}}

\renewcommand{\t}{^{\mbox{\tiny\sf T}}}

\begin{abstract}
 The issue of state estimation is considered for an SIR-SI epidemiological model describing a vector-borne disease such as dengue fever, subject to seasonal variations.
 Assuming continuous measurement of the incidence rate (that is the number of new infectives in the host population per unit time), a class of interval observers with estimate-dependent gain is constructed, and asymptotic error bounds are provided.
The synthesis method is based on the search for a common linear Lyapunov function for monotone systems that represent the evolution of the estimation errors.
\end{abstract}

\section{ Introduction}
\label{se1}

 Vectors are living organisms that can transmit infectious diseases between humans or from animals to humans.
 Many of them are bloodsucking insects, which ingest disease-producing microorganisms during a blood meal from an infected host and inject it into a new host during a subsequent blood meal.
Vector-borne diseases account for more than 17\% of all infectious diseases, causing more than 1 million deaths annually.
%Mosquitoes are the best known disease vector, but not the only one.
% Others include ticks, flies, sandflies, fleas, triatomine bugs and some freshwater aquatic snails. 
As an example, more than 2.5 billion people in over 100 countries are at risk of contracting dengue, a vector-borne disease transmitted by some mosquito species of the genus {\em Aedes} \cite{Organization:2016aa}.
Several of the arboviroses transmitted by the latter (Zika fever, chikungunya, dengue) have no satisfying vaccine or curative treatment so far, and prevention of the epidemics is a key to the control policy.
In particular, the knowledge of the stock of susceptible individuals constitutes, among others, an important information to evaluate the probability of occurrence of such events. 

    %Vector-borne diseases account for more than 17\% of all infectious diseases, causing more than 1 million deaths annually.
    %More than 2.5 billion people in over 100 countries are at risk of contracting dengue alone.
    %Malaria causes more than 400 000 deaths every year globally, most of them children under 5 years of age.
    %Other diseases such as Chagas disease, leishmaniasis and schistosomiasis affect hundreds of millions of people worldwide.
    %Many of these diseases are preventable through informed protective measures.

The dynamics of the dengue transmission is a quite complex subject, due to the role of the cross-reactive antibodies for the four different dengue serotypes \cite{Andraud:2017aa}.
We disregard here this multiserotype aspect, and focus on a basic model of the evolution of a vector-borne disease \cite{Dietz:1975aa,Esteva:1998aa}.
The latter is an SIR-SI \cite{Keeling:2008aa} compartmental model with vital dynamics (see equation \eqref{eq0} below).
It describes the evolution of the relative proportions of three classes in the host population: the susceptibles $S$ capable of contracting the disease and becoming infective, the infectives $I$ capable of transmitting the disease to susceptibles vectors, and the recovered $R$ permanently immune after healing; and only the two corresponding classes $S$ and $I$ in the vector population.
The denomination ``SIR-SI" of the model recalls this transmission mechanism.
The host incidence rate is assumed to be measured.

We are interested in this paper in estimating the repartition of the host and vector populations in time-varying, uncertain, conditions.
Our contribution is a class of {\em interval observers} \cite{Gouze:2000aa} providing lower and upper estimates for the state components.
Recent progress has been made in the design of interval observers for nonlinear systems, see  \cite{Raissi:2012aa,Efimov:2013aa} and a recent survey in \cite{Efimov:2016aa}.
The most recent methods usually involve linearization of the system through the use of the {\em Observer Canonical Form,} and then synthesis of interval observer for the corresponding Linear Parameter-Varying system.
However the state of the model considered here has dimension 5 and the scalar output is nonlinear with respect to the latter: in practice the complexity of the computations renders this approach intractable here.
%The presence of uncertainties and noises also forbids the use of successive output derivatives.

An interval observer is therefore designed directly (in the spirit e.g.\ of \cite{Moisan:2009aa} and related contributions).
We characterize its behavior
%, and make central use of the theory of {\em monotone systems} \cite{Hirsch:1988aa,Smith:1996aa} to characterize its behavior.
%Convergence is ensured
via the search for {\em common linear Lyapunov functions} \cite{Mason:2007aa} and by use of the theory of {\em monotone systems} \cite{Hirsch:1988aa,Smith:1996aa}.
An important feature is that the observer gains are chosen in order to maximize the convergence speed of the Lyapunov functions towards zero.
This permits to exploit the epidemic bursts to speed up the state estimation, through estimate-dependent gain scheduling.
Error estimates are provided that  show, in particular, fast convergence towards the true values in absence of uncertainties on the transmission rates.
%This last point is, up to our knowledge, an original feature in such framework.
Notice that related ideas have been applied to an SIR infection model \cite{Bliman:2017aa}.
Similarly to the situation presented in this reference, the epidemiological system studied here is {\em unobservable} in absence of infected hosts.
%, {\color{blue} and the proposed observers exploit the epidemic bursts to speed up the state estimation, through estimate-dependent gain scheduling}.

The paper is organized as follows.
The model is introduced and commented in Section \ref{se15}.
The main assumptions are presented in Section \ref{se2}, where some qualitative results are also provided.
The considered class of observers is given in Section \ref{se3}, together with some {\em a priori} estimates, and is proved to constitute a class of interval observers.
The main result (Theorem \ref{MainTheorem}) is provided in Section \ref{se4}, where the asymptotic error corresponding to adequate gain choice is quantified.
A numerical illustration is presented in Section \ref{se55}.
Some concluding remarks are given in Section \ref{se5}.
Several proofs have been gathered in the Appendix.
%Numerical simulations
% not completed before submission due to lack of time,
%will be provided in the final version of the manuscrit.

\section{Statement of the problem}
\label{se15}

Using index $h$ for the host population and $v$ for the vectors, the SIR-SI model writes
\begin{subequations}
\label{eq0}
\begin{align}
\dot{S_h} &= \mu_h - \beta_{vh}(t) S_h I_v - \mu_h S_h\\
\dot{I_h} &= \beta_{vh}(t)S_h I_v - (\mu_h + \gamma) I_h\\
\dot{R_h} &= \gamma I_h - \mu_h R_h\\
\dot{S_v} &= \mu_v - \beta_{hv}(t) S_v I_h - \mu_v S_v\\
\dot{I_v} &= \beta_{hv}(t) S_v I_h - \mu_v I_v \\
y(t) & = \beta_{vh}(t) S_h(t)  I_v(t)
\end{align}
\end{subequations}
%where $S_h,I_h,R_h$ represent the proportions of susceptible, infected and recovered  hosts, respectively, while $S_v,I_v$ represent the proportions of susceptible and infected vectors, respectively.
%The parameters represent the following quantities.
The parameters $\mu_h$, $\mu_v$ and $\gamma$ represent, respectively, the death rates for hosts and vectors, and the host recovery rate.
They are assumed to be constant and, as is e.g.\ the case for dengue,  the disease is supposed to induce no significant supplementary mortality in the infected individuals of both populations.
The latter are assumed to be stationary, hence $\mu_h$, $\mu_v$ also represent birth rates.
Notice that there is no recovery for the vectors, whose life duration is short compared to the disease dynamics.

The parameters $\beta_{vh}(t)$ and $\beta_{hv}(t)$ visible in the source terms for the evolution of the infected are, respectively, the {\em transmission rates} between infective vectors and susceptible hosts, and between infective hosts and susceptible vectors.
The conditions of transmission of the infection display significant seasonal variability (that may relate to changes in the climatic conditions, in the host population behavior\dots) and we therefore consider time-varying, uncertain, transmission rates.
%The conditions of transmission depend essentially on the climate conditions, and we therefore consider time-varying, uncertain, transmission rates.
The only measurement of the state components typically accessible to Public Health Services is the {\em host incidence}, i.e.\ the number of new infected hosts per unit time given by $y(t)=\beta_{vh}(t) S_h(t)  I_v(t)$.
This nonlinear expression of the state variable is the {\em measured output} of the model.

By construction, one has $\dot S_h+\dot I_h +\dot R_h = \dot S_v+\dot I_v = 0$, as the proportions verify $S_h+ I_h + R_h =  S_v+ I_v \equiv 1$.
%For each species, the birth and mortality rates are equal, since the size of both host and vector populations are} assumed to be constant.
This allows to remove one compartment of each population from the model, obtaining the following simplified  system:
\begin{subequations}
\label{SII}
\begin{align}
\label{dotSh} \dot{S_h} &= \mu_h - \beta_{vh}(t) S_h I_v - \mu_h S_h,\\
\label{dotIh} \dot{I_h} &= \beta_{vh}(t) S_h I_v - (\mu_h + \gamma) I_h,\\
\label{dotIv} \dot{I_v} &= \beta_{hv}(t) (1 - I_v) I_h - \mu_v I_v,\\
y(t) & =\beta_{vh}(t) S_h(t)  I_v(t)
\end{align}
\end{subequations}

When the transmission rates $\beta_{vh}, \beta_{hv}$ are constant, the evolution of the solutions of system \eqref{SII} depends closely upon the {\em basic reproduction ratio} $\cR_0 := \frac{\beta_{vh}\beta_{hv}}{(\mu_h+\gamma)\mu_v}$.
The disease-free equilibrium, defined by $S_h=1$, $I_h=I_v=0$ (and therefore $R_h=0$, $S_v=1$), always exists.
When $\cR_0<1$, it is the only equilibrium and it is globally asymptotically stable.
It becomes unstable when $\cR_0>1$, and an asymptotically stable endemic equilibrium then appears \cite{Esteva:1998aa}.
In practice, when the parameters are time-varying, complicated dynamics may occur.
This is especially the case due to seasonal changes of the climatic conditions, alternating between periods favorable and unfavorable to the occurrence of epidemic bursts.

{\em Our aim in this paper is to estimate at each moment the amount of the three involved populations $S_h(t), I_h(t), I_v(t)$, based on the measured incidence $y(t)$}.
An observer for this system has been proposed in the case of constant, known, parameters  \cite{Tami:2014aa} (assuming the availability of measures of $S_h$ and $I_h$), and this is up to our knowledge the unique contribution on the subject.
The challenging nature of the problem comes from the fact that when $I_v\equiv 0$, then $y\equiv 0$ and system \eqref{SII} is therefore unobservable.
\comment{
We introduce noise in measurement and unmodeled dynamics, and system \eqref{SII} that will be studied in the sequel is finally as follows:
\begin{subequations}
\label{SII}
\begin{align}
\label{dotSh} \dot{S_h} &= \mu_h - \beta_{vh}(t) S_h I_v - \mu_h S_h +w_1(t),\\
\label{dotIh} \dot{I_h} &= \beta_{vh}(t) S_h I_v - (\mu_h + \gamma) I_h +w_2(t),\\
\label{dotIv} \dot{I_v} &= \beta_{hv}(t) (1 - I_v) I_h - \mu_v I_v +w_3(t),\\
y(t) & = \beta_{vh}(t) S_h(t) I_v(t) +w_4(t)
\end{align}
\end{subequations}
}

\section{Model assumptions and properties}
\label{se2}

The host mortality rate (corresponding typically to several tens of years for humans) is assumed very small compared to both the recovery and the vector mortality rates (corresponding typically to some weeks).
Our first hypothesis is therefore:
\begin{equation}
\label{eq26}
\mu_h \ll \gamma,\quad  \mu_h \ll  \mu_v.
\end{equation}

As said previously, the transmission rates $\beta_{vh}$ and $\beta_{hv}$ are supposed to be subject to seasonal variations.
We assume moreover that their exact value is not known, but that they are bounded by lower and upper estimates $\beta_{vh}^-(t),\beta_{vh}^+(t)$ and $\beta_{hv}^-(t),\beta_{hv}^+(t)$, {\em available in real-time}: for any $t\geq 0$,
\begin{equation}
\beta_{vh}^-(t) \leq \beta_{vh}(t) \leq \beta_{vh}^+(t),\qquad
\beta_{hv}^-(t) \leq \beta_{hv}(t) \leq \beta_{hv}^+(t).
\end{equation}
\comment{
On the other hand, the functions $w_i$, $i=1,\dots, 4$, are assumed measured functions, and one assumes that exist integrable functions $w_i^\pm$, $i=1,\dots, 4$ such that for a.e.\ $t\geq 0$,
\begin{equation}
w_i^-(t) \leq w_i(t) \leq w_i^+(t),\qquad i=1,\dots, 4
\end{equation}
}

%System \eqref{SII} is (``precisely") known when the previous relations hold with equalities.

Our first result shows that the solutions of system \eqref{SII} respect the expected bounds.

\begin{lemma}[Properties of model \eqref{SII}]
\label{le1}
The solutions of system \eqref{SII} are such that: if $S_h(t)\geq 0, I_h(t)\geq 0, I_v(t)\geq 0,$ $S_h(t)+I_h(t)\leq 1, I_v(t)\leq 1$ for $t=0$,  then the same properties hold for any $t\geq 0$.
Similarly for strict inequalities.
\hfill$\square$
\end{lemma}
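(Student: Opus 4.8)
The plan is to prove forward invariance of the closed region
\[
\mathcal{D} := \{(S_h,I_h,I_v) : S_h\geq 0,\ I_h\geq 0,\ I_v\geq 0,\ S_h+I_h\leq 1,\ I_v\leq 1\},
\]
which is a convex polytope (a finite intersection of half-spaces). Since the right-hand side of \eqref{SII} is continuous and locally Lipschitz in the state, the natural tool is the subtangentiality (Nagumo) criterion: $\mathcal{D}$ is invariant if and only if, at every boundary point, the vector field lies in the tangent cone to $\mathcal{D}$, which reduces to checking the sign of the time-derivative of each defining inequality on the face where it is active.

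I would then treat the faces one by one, using the equations on each face together with the fact that the remaining inequalities still hold there. On $\{S_h=0\}$, equation \eqref{dotSh} gives $\dot S_h=\mu_h>0$; on $\{I_h=0\}$, equation \eqref{dotIh} gives $\dot I_h=\beta_{vh}(t)S_hI_v\geq 0$; on $\{I_v=0\}$, equation \eqref{dotIv} gives $\dot I_v=\beta_{hv}(t)I_h\geq 0$; and on $\{I_v=1\}$, equation \eqref{dotIv} gives $\dot I_v=-\mu_v<0$. The only computation that is not immediate is the cap $S_h+I_h\leq 1$: setting $N_h:=S_h+I_h$ and adding \eqref{dotSh} and \eqref{dotIh}, the bilinear transmission terms cancel, leaving
\[
\dot N_h=\mu_h(1-N_h)-\gamma I_h,
\]
so that $\dot N_h=-\gamma I_h\leq 0$ on $\{N_h=1\}$. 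On every bounding face the flow is thus inward or tangent, i.e. the tangent-cone condition holds, and invariance of $\mathcal{D}$ follows.

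I expect the main delicacy to be twofold: several faces yield only tangency ($\dot g\leq 0$ rather than $\dot g<0$ for the active constraint $g$), and the three positivity conditions are mutually coupled, since the sign of $\dot I_h$ on $\{I_h=0\}$ uses $S_h,I_v\geq 0$, that of $\dot I_v$ on $\{I_v=0\}$ uses $I_h\geq 0$, and so on, so a naive face-by-face reading is circular. I would dispose of both issues at once by recasting each scalar bound as a linear differential inequality with nonnegative forcing and applying Gronwall along the solution: $\dot S_h\geq -(\mu_h+\beta_{vh}(t)I_v)S_h$, $\dot I_h\geq -(\mu_h+\gamma)I_h$, $\dot I_v\geq -(\mu_v+\beta_{hv}(t)I_h)I_v$, together with $\frac{d}{dt}(1-I_v)\geq -(\mu_v+\beta_{hv}(t)I_h)(1-I_v)$ and $\frac{d}{dt}(1-N_h)\geq -\mu_h(1-N_h)$. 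Evaluating these on the interval up to a first hypothetical exit time (on which all inequalities, hence all nonnegativity properties of the forcing terms, still hold) shows that each quantity remains $\geq 0$, and remains $>0$ when it starts $>0$; this simultaneously rules out any exit and yields both the non-strict and the strict versions of the claim. Finally, boundedness of $\mathcal{D}$ precludes finite-time blow-up, so the solutions are defined for all $t\geq 0$.
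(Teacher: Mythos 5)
Your proposal is correct, and the face-by-face computations are exactly those in the paper's Appendix A: $\dot S_h=\mu_h>0$ on $\{S_h=0\}$, $\dot I_v=-\mu_v<0$ on $\{I_v=1\}$, and the cancellation of the bilinear terms giving $\frac{d}{dt}(S_h+I_h)=\mu_h(1-S_h-I_h)-\gamma I_h\leq 0$ on $\{S_h+I_h=1\}$. The genuine difference lies in how the coupled nonnegativity of $(I_h,I_v)$ is closed. The paper first gets $S_h\geq 0$ unconditionally, then rewrites \eqref{dotIh}--\eqref{dotIv} as a linear time-varying system in $(I_h,I_v)$ whose matrix is Metzler precisely because $S_h\geq 0$, and invokes the theory of monotone (cooperative) systems; this disposes of the tangency and circularity issues in a single citation. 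You instead invoke Nagumo's subtangentiality criterion on the polytope $\mathcal{D}$ and reinforce it with scalar Gronwall inequalities up to a first exit time; this is more elementary, treats all five constraints uniformly, and yields as a by-product the global-existence remark (boundedness of $\mathcal{D}$) that the paper leaves implicit. One caveat: your closing claim that the Gronwall bounds ``simultaneously rule out any exit'' is only immediate when the relevant components start strictly positive; if, say, $I_h(0)=I_v(0)=0$, the lower bound at the putative exit time is $0$ and gives no contradiction, so for the tangent case you must genuinely rely on the Nagumo criterion you invoked (or on a quasipositivity/perturbation argument) rather than on Gronwall alone --- the monotone-systems citation plays exactly this role in the paper. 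Finally, your strict-inequality argument via variation-of-constants identities (each quantity equals a nonnegative initial term plus a nonnegative integral, the $\mu_h$ source even making $S_h(t)>0$ for $t>0$ regardless of $S_h(0)$) is literally the paper's own.
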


See in Appendix A a proof of Lemma \ref{le1}.

\section{Observers: definition and monotonicity properties}
\label{se3}

We  now introduce the two following systems, and show in the sequel that, under appropriate conditions, they constitute interval observers for system \eqref{SII}:
\begin{subequations}\label{OBS1}
\begin{align}
\label{OBS1a}
\hspace{-.2cm}
\dot{S}_h^- &= \mu_h (1 - {S}_h^-) - y + k_S^-(t) (y - {\beta}_{vh}^+(t) {S}_h^-{I}_v^+) \\
\label{OBS1b}
\hspace{-.2cm}
\dot{I}_h^+ &= y - (\mu_h + \gamma){I}_h^+ %\color{red} + k_{h}^+ (y - \hat{\beta}_{vh}{S}_h^-{I}_v^+) 
\\
\label{OBS1c}
\hspace{-.2cm}
\dot{I}_v^+ &= \beta_{hv}^+(t) (1 - {I}_v^+) {I}_h^+ - \mu_v {I}_v^+ + k_{v}^+(t) (y - {\beta}_{vh}^-(t) {S}_h^-{I}_v^+)
\end{align}
\end{subequations}
\vspace{-.5cm}
\begin{subequations}\label{OBS2}
\begin{align}
\label{OBS2a}
\hspace{-.2cm}
\dot{S}_h^+ &= \mu_h (1 - {S}_h^+) - y +  k_S^+(t) (y - \beta_{vh}^-(t) {S}_h^+ {I}_v^-) \\
\label{OBS2b}
\hspace{-.2cm}
\dot{I}_h^- &= y - (\mu_h + \gamma){I}_h^- %\color{red} + k_{h}^- (y - \hat{\beta}_{vh} {S}_h^+ {I}_v^-)
 \\
\label{OBS2c}
\hspace{-.2cm}
\dot{I}_v^- &= \beta_{hv}^-(t) (1 - {I}_v^-) {I}_h^- - \mu_v {I}_v^- + k_{v}^-(t) (y - {\beta}_{vh}^+(t) {S}_h^+{I}_v^-)
\end{align}
\end{subequations}
As can be seen, output injection (the output is $y$) is used in this synthesis.
The time-varying gains $k_S^\pm(t), k_v^\pm(t)$ in \eqref{OBS1a}, \eqref{OBS1c}, \eqref{OBS2a}, \eqref{OBS2c} are yet to be defined.

\begin{lemma}[Nonnegativity of the estimates]
\label{le2}
  Suppose that for some  $\varepsilon_1$, $\varepsilon_2>0,$ the gains $k_S^\pm(t), k_v^\pm(t)$ are chosen in such a way that, for any $t\geq 0$ :
  \begin{equation}\label{HypkS}
    \mu_h+ (k_S^\pm(t)-1)y(t) \geq \varepsilon_1, \quad \text{ whenever } S_h^\pm(t)\leq \varepsilon_2
%	\begin{split}
%     \mu_h+ (k_S^-(t)-1)y(t) \geq \varepsilon_1,&\quad \text{whenever } S_h^-(t)\leq \varepsilon_2,\\
%     \mu_h+ (k_S^+(t)-1)y(t) \geq \varepsilon_1,&\quad \text{whenever } S_h^+(t)\leq \varepsilon_2,
%     \end{split}
  \end{equation}

 Then the solutions of system \eqref{OBS1} are such that:  if $S_h^-(t), I_h^+(t), I_v^+(t)\geq 0$ for $t=0$, then the same remains true for any $t>0$.
 %$S_h^-(t)\geq 0.$ Then same holds for $I_h^+$ and $I_v^+.$  
 Analogously, the solutions of system \eqref{OBS2} are such that:  if $S_h^+(t), I_h^-(t), I_v^-(t)\geq 0$ for $t=0$, then the same remains true for any $t>0$.
% if $S_h^+(0)\geq 0,$ then $S_h^+(t)\geq 0.$ Then same holds for $I_h^-$ and $I_v^-.$
\hfill$\square$
\end{lemma}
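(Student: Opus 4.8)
The plan is to establish forward invariance of the nonnegative orthant $\{S_h^-\geq 0,\ I_h^+\geq 0,\ I_v^+\geq 0\}$ for \eqref{OBS1} (and of the symmetric orthant for \eqref{OBS2}) by checking, on each coordinate face, that the corresponding component of the vector field points inward, i.e.\ is nonnegative where that component vanishes. Throughout I use that the measured output is a genuine incidence rate, so $y(t)\geq 0$ for all $t$ (which also follows from Lemma~\ref{le1} applied to the true state). A convenient feature of \eqref{OBS1} is a cascade structure that lets me treat the components essentially one at a time.

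First, the $I_h^+$-equation \eqref{OBS1b} is decoupled and affine: writing $\frac{d}{dt}\big(e^{(\mu_h+\gamma)t}I_h^+\big)=e^{(\mu_h+\gamma)t}\,y\geq 0$ and integrating yields $I_h^+(t)\geq e^{-(\mu_h+\gamma)t}I_h^+(0)\geq 0$ for all $t\geq 0$. Next, for \eqref{OBS1c} I regroup the terms as $\dot I_v^+ = P(t)-Q(t)I_v^+$, with $P=\beta_{hv}^+ I_h^+ + k_v^+ y$ and $Q=\beta_{hv}^+ I_h^+ + \mu_v + k_v^+\beta_{vh}^- S_h^-$. Since $I_h^+\geq 0$, $y\geq 0$ and the vector gain $k_v^+$ is taken nonnegative, one has $P(t)\geq 0$; the same integrating-factor computation (now with $\exp\int_0^t Q$, well defined for any continuous solution) gives $I_v^+(t)\geq 0$. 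Equivalently, this is the tangency condition on the face $I_v^+=0$, where $\dot I_v^+=P\geq 0$. Note that the sign of $S_h^-$ plays no role here, as it enters only $Q$, not $P$.

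The delicate component is $S_h^-$, governed by \eqref{OBS1a}, which I rewrite as $\dot S_h^- = b(t)-a(t)S_h^-$ with $b=\mu_h+(k_S^- -1)y$ and $a=\mu_h+k_S^-\beta_{vh}^+ I_v^+$. A global integrating-factor argument fails here because $b$ is not sign-definite: when $S_h^-$ is large the correction $(k_S^- -1)y$ may (legitimately) render $\dot S_h^-$ negative. The point is that I only need the tangency condition on the face $S_h^-=0$, where the term $a\,S_h^-$ vanishes and $\dot S_h^- = b(t)=\mu_h+(k_S^-(t)-1)y(t)$. Since $S_h^-=0\leq\varepsilon_2$, hypothesis \eqref{HypkS} applies and gives $\dot S_h^-\geq\varepsilon_1>0$ there. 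I then close with a first-crossing (barrier) argument: if $S_h^-$ ever became negative, let $t_1$ be the infimum of such times; by continuity $S_h^-(t_1)=0$ and $S_h^-\geq 0$ on $[0,t_1]$, but $\dot S_h^-(t_1)\geq\varepsilon_1>0$ forces $S_h^->0$ immediately after $t_1$, contradicting the definition of $t_1$. Hence $S_h^-(t)\geq 0$ for all $t\geq 0$. The treatment of \eqref{OBS2} is entirely symmetric, exchanging the $+$/$-$ bounds and using the $k_S^+$ branch of \eqref{HypkS} together with $k_v^-\geq 0$.

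I expect the main obstacle to be precisely the $S_h^\pm$ equations: the output injection makes them non-cooperative, so invariance cannot come from a monotonicity or a plain integrating-factor argument. The role of the conditional hypothesis \eqref{HypkS} — a lower bound active only near the boundary $S_h^\pm\leq\varepsilon_2$, with strict margin $\varepsilon_1>0$ — is exactly to restore the inward-pointing condition on the relevant face while leaving the gain free elsewhere; the strictness $\varepsilon_1>0$ is what makes the barrier argument clean and lets me avoid any delicate non-strict invariance theorem. A secondary point to state carefully is the standing nonnegativity of the vector gains $k_v^\pm$, which is what secures $P(t)\geq 0$ in the $I_v^\pm$ step.
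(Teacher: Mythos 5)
Your proof is correct and follows essentially the same route as the paper's own (very terse) argument, namely checking that the vector field points inward on each coordinate face of the nonnegative orthant; you merely make the cascade $I_h^\pm \to I_v^\pm \to S_h^\pm$ and the first-crossing argument for $S_h^\pm$ explicit. You also rightly flag that nonnegativity of $k_v^\pm$ is needed for the $I_v^\pm$ step: this hypothesis is absent from the statement of Lemma~\ref{le2} (condition \eqref{HypkS} constrains only $k_S^\pm$) and is only imposed later in Theorem~\ref{th2}, so your explicit mention of it closes a small gap that the paper's one-line proof glosses over.
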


\begin{proof}
Observe that, whenever $S_h^-$ is close to zero, one has $\dot{S}_h^- \sim \mu_h + (k_S^-(t)-1)y \geq \varepsilon_1 >0$,  due to \eqref{HypkS}.
One also has that $\dot{I}_ h^+$ is nonnegative whenever $I_h^+$ is in a neighborhood of $0.$ The same holds for $I_v^+$, and this concludes the proof of Lemma \ref{le2} for system \eqref{OBS1}.
The proof for \eqref{OBS2} is analogous.
\end{proof}

Condition \eqref{HypkS} imposes special care in the choice of the gains when the estimates $S_h^\pm(t)$ come close to zero.

The next result shows that, under sufficient conditions on the gain, equations \eqref{OBS1} and \eqref{OBS2} provide an interval observer for \eqref{SII}, this is, they provide upper and lower estimates of the three state components.

\begin{theorem}[Ordering property of the estimates]
\label{th2}
Assume that  the gains $k_S^\pm(t), k_v^\pm(t)$ are chosen in such a way that, for any $t\geq 0$, $k_S^\pm(t)\geq 0, k_v^\pm(t)\geq 0$ and that condition \eqref{HypkS} is verified. 
Suppose that  the solutions of \eqref{OBS1}, \eqref{OBS2} are such that
\begin{subequations}
\label{eq8}
\begin{gather}
0\leq S_h^-(t) \leq S_h(t) \leq S_h^+(t)\\
0\leq I_h^-(t) \leq I_h(t) \leq I_h^+(t)\\
0\leq I_v^-(t) \leq I_v(t) \leq I_v^+(t)
\end{gather}
\end{subequations}
for $t=0$.
Then the same holds for any $t\geq 0$.
\hfill$\square$ 
\end{theorem}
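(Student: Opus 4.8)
The plan is to recast the six inequalities in \eqref{eq8} in terms of \emph{error variables} and to prove that the nonnegative orthant is forward invariant for the system they satisfy. Concretely, I would introduce the lower errors $e_S^- := S_h - S_h^-$, $e_{I_h}^- := I_h - I_h^-$, $e_{I_v}^- := I_v - I_v^-$ and the upper errors $e_S^+ := S_h^+ - S_h$, $e_{I_h}^+ := I_h^+ - I_h$, $e_{I_v}^+ := I_v^+ - I_v$, so that \eqref{eq8} is equivalent to all six errors being nonnegative. Here the leftmost nonnegativities $S_h^-,I_h^-,I_v^-\geq 0$ are supplied by Lemma \ref{le2} (which is why \eqref{HypkS} is assumed in the statement), while $S_h,I_h,I_v\geq 0$ and the physical bound $I_v\leq 1$ come from Lemma \ref{le1}; the new content of Theorem \ref{th2} is the chain of $\leq$ relations, i.e.\ the nonnegativity of the six errors. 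Gathering them into $e\in\mathbb{R}^6$, the derivative along \eqref{SII}, \eqref{OBS1}, \eqref{OBS2} takes the form $\dot e = g(t,e)$, where the $t$-dependence carries the true trajectory, the bounds $\beta_{vh}^\pm,\beta_{hv}^\pm$ and the gains $k_S^\pm,k_v^\pm$.

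The decisive step is to verify the flow-invariance (Nagumo-type) condition: on each face $\{e_j=0,\ e\geq 0\}$ of the orthant, the corresponding component $g_j(t,e)$ is nonnegative. First I would observe that the two $I_h$-equations decouple: using $\dot I_h = y-(\mu_h+\gamma)I_h$ together with \eqref{OBS1b}, \eqref{OBS2b} one gets $\dot e_{I_h}^\pm = -(\mu_h+\gamma)e_{I_h}^\pm$, clearly nonnegative when $e_{I_h}^\pm=0$ (indeed these errors decay exponentially). For the four remaining components the computation is a routine cancellation in which the output-injection terms eliminate the $-y$ contributions, after which each derivative, evaluated where its own error vanishes, splits into a \emph{transport} term of the form $(1-I_v)\bigl(\beta_{hv}^+ I_h^+ - \beta_{hv} I_h\bigr)$ for $e_{I_v}^+$, resp.\ $S_h\bigl(\beta_{vh} I_v - \beta_{vh}^+ I_v^+\bigr)$ for $e_S^-$, and the analogous expressions for the ``$-$'' observer, each a difference of products of ordered nonnegative factors (using $\beta_{hv}^-\leq\beta_{hv}\leq\beta_{hv}^+$, $\beta_{vh}^-\leq\beta_{vh}\leq\beta_{vh}^+$, $0\leq I_v\leq 1$, and the remaining orderings), hence correctly signed; plus an \emph{innovation} term equal to a gain times $\pm\bigl(y-\beta_{vh}^{\bullet}S_h^{\bullet}I_v^{\bullet}\bigr)$, where $y=\beta_{vh}S_hI_v$ is compared factor-by-factor with the subtracted bilinear product, the transmission bound appearing there being exactly the one that makes the comparison valid. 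It is precisely in controlling the sign of these innovation terms that the hypotheses $k_S^\pm,k_v^\pm\geq 0$ are used in an essential way.

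Having established the sign condition on every face, I would conclude by the standard forward-invariance criterion for the nonnegative orthant: since $g_j(t,e)\geq 0$ whenever $e_j=0$ and $e\geq 0$, the set $\{e\geq 0\}$ is positively invariant, which is exactly the assertion of Theorem \ref{th2}. Combining this with Lemmas \ref{le1} and \ref{le2} then yields all of \eqref{eq8} for every $t\geq 0$.

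I anticipate the main obstacle to be twofold. First is the careful bookkeeping of the six sign conditions: one must pair the correct transmission bound with the correct estimate in every innovation term (for instance $\beta_{vh}^+S_h^-I_v^+$ in \eqref{OBS1a} versus $\beta_{vh}^-S_h^+I_v^-$ in \eqref{OBS2a}) so that each inequality points the right way, and the asymmetry between the ``$+$'' and ``$-$'' observers must be tracked precisely throughout. Second, the invariance argument itself is delicate because the guaranteed derivative is only $\geq 0$, not $>0$, on the boundary, so the conclusion cannot rest on a naive ``the flow points inward'' heuristic; it requires the comparison/subtangentiality formulation (equivalently, a quasimonotonicity argument in the sense of M\"uller and Kamke), or a small-perturbation regularization, to rule out the errors leaving the orthant tangentially.
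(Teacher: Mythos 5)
Your proposal is correct and rests on the same underlying fact as the paper's proof: the six inequalities \eqref{eq8} are equivalent to nonnegativity of the error vector, and this orthant is forward invariant because on each boundary face the relevant derivative is nonnegative, by virtue of $k_S^\pm,k_v^\pm\geq 0$, the ordering $\beta^-\leq\beta\leq\beta^+$ of the transmission rates, and the bounds supplied by Lemmas \ref{le1} and \ref{le2}. The packaging differs, though. Rather than a face-by-face Nagumo check in $\mathbb{R}^6$, the paper first proves Lemma \ref{le3}: the errors split into the two \emph{decoupled} triples $(e_S^-,e_h^+,e_v^+)$ and $(e_S^+,e_h^-,e_v^-)$, each obeying a linear time-varying system $\dot X=A(t)X+b(t)$ with $A(t)$ Metzler and $b(t)\geq 0$ (formulas \eqref{eq16}), after which invariance of the nonnegative orthant is immediate. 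This buys two things you would otherwise supply by hand: the cross-couplings you must track on each face are organized once and for all into the off-diagonal entries of $A(t)$, and the tangency issue you rightly flag (the boundary derivative being only $\geq 0$) disappears, since for a Metzler system the state-transition matrix is nonnegative and variation of constants gives $X(t)\geq 0$ directly, with no need for a Kamke--M\"uller or regularization argument. Your direct route works too, but if you carry it out, fix one sign in your sketch: for $e_S^-$ there is no ``transport'' term at all (the $-y$ in \eqref{OBS1a} exactly cancels the $-\beta_{vh}S_hI_v$ in the plant), and on the face $\{S_h^-=S_h\}$ the entire contribution is the innovation $-k_S^-\bigl(y-\beta_{vh}^+S_h^-I_v^+\bigr)=k_S^-\,S_h\bigl(\beta_{vh}^+I_v^+-\beta_{vh}I_v\bigr)\geq 0$; the bracket $S_h(\beta_{vh}I_v-\beta_{vh}^+I_v^+)$ you list as ``correctly signed'' is in fact nonpositive and only becomes favorable after multiplication by $-k_S^-$.
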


%% PROOF OF SECOND LEMMA %%
%\if{
To demonstrate Theorem \ref{th2}, an instrumental decomposition result is now given, whose proof can be found in Appendix B.
Lemma \ref{le3} indicates in particular that the errors obey some linear {\em positive} systems: based on this remark, the proof of Theorem \ref{th2} is straightforward.

\begin{lemma}[Dynamics of the observer errors]
\label{le3}
The observer errors attached to \eqref{OBS1}, \eqref{OBS2}, defined by
\begin{subequations}
\label{eq22}
\begin{gather}
\label{eq22a}
X_{\text{\eqref{OBS1}}}
:=
\begin{pmatrix} 
e_S^- \\ e_h^+ \\ e_v^+ 
\end{pmatrix}
:=
\begin{pmatrix}
S_h - S_h^-\\
 I_h^+ - I_h \\
 I_v^+ - I_v 
\end{pmatrix}\\
\label{eq22b}
\XOBSb:=
\begin{pmatrix} 
e_S^+ \\ e_h^- \\ e_v^- 
\end{pmatrix}
:=
\begin{pmatrix}
S_h^+ - S_h\\
 I_h - I_h^- \\
 I_v - I_v^- 
\end{pmatrix}
\end{gather}
\end{subequations}
fulfill, for any $t\geq 0$, the equations
%\begin{equation}\label{dotX+}
\begin{subequations}
\label{eq15}
\begin{gather}
\label{eq15a}
\dXOBSa (t) = \AOBSa(t)\XOBSa(t) + \bOBSa(t)\\
\label{eq15b}
\dXOBSb(t) =  \AOBSb(t) \XOBSb(t)+  \bOBSb(t)
\end{gather}
\end{subequations}
where $\AOBSa(t), \AOBSb(t)$ and $\bOBSa(t), \bOBSb(t)$ are defined in \eqref{eq16}.
\begin{figure*}
\begin{subequations}
\label{eq16}
\begin{gather}
\label{eq16a}
\AOBSa(t)
= 
\begin{pmatrix}
-\mu_h - k_S^- {\beta}_{vh}^+ {I}_v^+ & 0 & k_S^- {\beta_{vh}}^+ {S}_h  \\
 0 & -(\mu_h + \gamma) & 0 \\
k_v^+ {\beta_{vh}}^- {I}_v^+  & {\beta}_{hv}^+ (1 - I_v) & -  k_v^+ {\beta_{vh}}^- S_h - \beta_{hv}^+ {I}_h^+- \mu_v \\
\end{pmatrix}\hspace{-.12cm} , \hspace{.01cm}
\bOBSa(t)=
\begin{pmatrix}
k_S^- (\beta_{vh}^+ - {\beta_{vh}}) S_h I_v \\
0 \\
k_v^+  (\beta_{vh} - {\beta_{vh}}^-) S_h I_v+ (\beta_{hv}^+- {\beta}_{hv}) I_h (1-I_v)
\end{pmatrix}\\
\label{eq16c}
\AOBSb(t)
= 
\begin{pmatrix}
-\mu_h - k_S^+ {\beta}_{vh}^- {I}_v^- & 0 & k_S^+ {\beta_{vh}}^- {S}_h  \\
 0 & -(\mu_h + \gamma) & 0 \\
k_v^- {\beta_{vh}}^+ {I}_v^-  & {\beta}_{hv}^- (1 - I_v) & -  k_v^- {\beta_{vh}}^+ S_h - \beta_{hv}^- {I}_h^-- \mu_v \\
\end{pmatrix}\hspace{-.12cm} , \hspace{.01cm}
\bOBSb(t)=
\begin{pmatrix}
k_S^+ (\beta_{vh} - {\beta_{vh}}^-) S_h I_v \\
0 \\
k_v^-  (\beta_{vh}^+ - {\beta_{vh}}) S_h I_v  + (\beta_{hv}- {\beta}_{hv}^-) I_h (1-I_v)
\end{pmatrix}
\end{gather}
\end{subequations}
\hrule
\end{figure*}

Moreover, for any $t\geq 0$, the matrices $\AOBSa(t), \AOBSb(t)$ are Metzler\footnote{A square matrix is called a {\em Metzler matrix} \cite{Farina:2011aa} or an {\em essentially nonnegative matrix} \cite{Berman:1994aa} if all its off-diagonal components are nonnegative.}, and the vectors $\bOBSa(t), \bOBSb(t)$ are nonnegative, and null in the absence of uncertainties (that is when $\beta_{vh}^-(t) = \beta_{vh}(t) = \beta_{vh}^+(t)$ and $\beta_{hv}^-(t) = \beta_{hv}(t) = \beta_{hv}^+(t)$).
\hfill$\square$
\end{lemma}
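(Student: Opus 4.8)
The statement bundles three assertions: that the error vectors $\XOBSa,\XOBSb$ obey the affine systems \eqref{eq15} with the data \eqref{eq16}; that $\AOBSa(t),\AOBSb(t)$ are Metzler; and that $\bOBSa(t),\bOBSb(t)$ are nonnegative and vanish when the rate bounds are exact. The plan is to prove everything for system \eqref{OBS1}, the case of \eqref{OBS2} being entirely symmetric (swap $^+\leftrightarrow^-$ on the bounds and the roles of the two triples). The only real work is the algebraic identity behind \eqref{eq16a}; once it is in hand, the Metzler and nonnegativity claims are immediate sign inspections.

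For the affine form, I would differentiate each component of \eqref{eq22a} and substitute the true dynamics \eqref{SII} together with the observer equations \eqref{OBS1}. For instance $\dot e_S^-=\dot S_h-\dot S_h^-$; here the decisive simplification is to replace $y$ everywhere by its definition $y=\beta_{vh}S_hI_v$, after which the injected $-y$ terms common to $\dot S_h$ and $\dot S_h^-$ cancel and only bilinear products of states, estimates and rate bounds survive. I would then check that the proposed splitting is consistent by expanding $\AOBSa\XOBSa+\bOBSa$ using $S_h^-=S_h-e_S^-$, $I_v^+=I_v+e_v^+$, $I_h^+=I_h+e_h^+$ and verifying it reproduces the raw derivative. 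The nontrivial point is that the cross-products telescope: in the first row the terms $k_S^-\beta_{vh}^+I_v^+S_h$ and $k_S^-\beta_{vh}^+S_hI_v$ cancel pairwise, leaving exactly $-\mu_h e_S^--k_S^-\beta_{vh}S_hI_v+k_S^-\beta_{vh}^+S_h^-I_v^+$; the third (vector) row requires the analogous cancellation in both the $k_v^+\beta_{vh}$ products and the $\beta_{hv}$ reaction products, after which the residual collapses to $\beta_{hv}^+(1-I_v^+)I_h^+-\beta_{hv}(1-I_v)I_h$ plus the output correction. In each case the part linear in the errors is precisely $\AOBSa\XOBSa$, while the leftover—which carries only the bound gaps $(\beta_{vh}^+-\beta_{vh})$, $(\beta_{vh}-\beta_{vh}^-)$, $(\beta_{hv}^+-\beta_{hv})$—is exactly $\bOBSa$.

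For the Metzler property I would read off the off-diagonal entries of $\AOBSa$ in \eqref{eq16a}: the only nonzero ones are $k_S^-\beta_{vh}^+S_h$, $k_v^+\beta_{vh}^-I_v^+$ and $\beta_{hv}^+(1-I_v)$, each a product of nonnegative factors since the gains are nonnegative by hypothesis, the rate bounds are nonnegative, $S_h\geq0$ and $I_v\leq1$ hold by Lemma \ref{le1}, and $I_v^+\geq0$ holds by Lemma \ref{le2}. Nonnegativity of $\bOBSa$ follows the same way, now using in addition the correct sign of the bound gaps and $S_h,I_h,I_v\geq0$, $1-I_v\geq0$ from Lemma \ref{le1}; and since every entry of $\bOBSa$ carries a factor of the form $\beta^\pm-\beta$, it vanishes identically when $\beta_{vh}^-=\beta_{vh}=\beta_{vh}^+$ and $\beta_{hv}^-=\beta_{hv}=\beta_{hv}^+$. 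I would stress that only Lemmas \ref{le1}--\ref{le2} and the gain hypotheses \eqref{HypkS} (with $k_S^\pm,k_v^\pm\geq0$) are invoked, never the conclusion \eqref{eq8} of Theorem \ref{th2}, so no circularity arises with the theorem this lemma is meant to establish. The expected main obstacle is purely the bookkeeping in the second paragraph: the decomposition $A\,X+b$ is not unique, and one must confirm that the specific split displayed in \eqref{eq16}—the one tailored to make $A$ Metzler and $b$ nonnegative—matches the derivative term by term, which I would organize by grouping the $k$-weighted $\beta_{vh}$ products, the $\beta_{hv}$ products, and the linear $\mu$ terms separately.
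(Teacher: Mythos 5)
Your proposal is correct and follows essentially the same route as the paper's Appendix~B: differentiate the error components, substitute \eqref{SII} and \eqref{OBS1} with $y=\beta_{vh}S_hI_v$, split the output-injection terms $y-\beta_{vh}^\pm S_h^-I_v^+$ into an error-linear part plus a bound-gap part (your ``telescoping'' is the paper's add-and-subtract regrouping), and then read the Metzler/nonnegativity claims off the signs using Lemmas \ref{le1}--\ref{le2} and the nonnegativity of the gains. Whether one derives the decomposition forward or verifies it by expanding $\AOBSa\XOBSa+\bOBSa$ is immaterial; no gap.
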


Observe that the matrices $\AOBSa(t), \AOBSb(t)$ and the vectors $\bOBSa(t), \bOBSb(t)$ defined in Lemma \ref{le3} depend upon time through the values of the transmission rates and their upper and lower estimates, but also through components of the initial system \eqref{SII} and of the observers \eqref{OBS1} and \eqref{OBS2}.
For brevity this time-dependence is not explicitly shown in \eqref{eq16}.

As a last remark, notice that easy computations (not reproduced here) establish that the verification of the Metzler property in Lemma \ref{le3} implies that no gain $k_h^\pm(t)$ should be introduced in \eqref{OBS1b}, \eqref{OBS2b}.
%As a last remark, notice that it would be natural to introduce gains $k_h^\pm(t)$ in \eqref{OBS1b}, \eqref{OBS2b}.
%However, easy computations not reproduced here establish that the Metzler property in Lemma \ref{le3} requires their nullity.

%}\fi

\if{

\section{Observers: error estimates}

Let $S_h^+$, $I_h^+,$ $I_v^+$ be upper observers and $S_h^-$, $I_h^-,$  $I_v^-$ lower observers for $S_h$, $I_h$ and $I_v,$ respectively.
We consider the two independent differential systems for the error vectors given by
\begin{equation}
X^+:=
\begin{pmatrix} 
e_S^- \\ e_h^+ \\ e_v^+ 
\end{pmatrix}
:=
\begin{pmatrix}
S_h - S_h^-\\
 I_h^+ - I_h \\
 I_v^+ - I_v 
\end{pmatrix}
;\quad 
X^-:=
\begin{pmatrix} 
e_S^+ \\ e_h^- \\ e_v^- 
\end{pmatrix}
:=
\begin{pmatrix}
S_h^+ - S_h\\
 I_h - I_h^- \\
 I_v - I_v^- 
\end{pmatrix}
\end{equation}
%\indent Para garantir a relação de ordem é necessário que os erros, caso sejam iniciados com valores não-negativos, permaneçam não-negativos ao longo do tempo. Substituindo um dos sistemas (11) em (10), a não-negatividade de todos os termos fora da diagonal principal na matriz $3\times 3$ que multiplica o vetor de erros, e também do último termo do lado direito da igualdade, garantirá a monotonicidade do sistema e a não-negatividade dos erros (ver \texttt{[5]}). Escolhendo como exemplo, sem perda de generalidade, o sistema formado por $e_S^-, e_h^+, e_v^+$ e substituindo $\hat{\beta}_{vh}$ e $\hat{\beta}_{hv}$  pelas estimativas necessárias para garantir a positividade do vetor $b(\cdot)$ definido embaixo, têm-se:
In order to guarantee the order between the variables and its estimatives, we want that the errors mantain theirs signs, this is, if they start with positive (negative) values they should remain at positive (negative) values. For this it is sufficient that the matrix $A$ in \eqref{ErrorSystem} has nonnegative off-diagonal terms and that the vector $b$ of the right-hand side of the equation \eqref{ErrorSystem} has nonnegative entries. In order to guarantee the nonnegativity of the off-diagonal terms for $A$ it is sufficient to set $\hat k_h\equiv 0.$

For the system of variable $X^+:=(e_S^-,e_h^+,e_v^+)^\top$ we get the dynamics

}\fi

\section{Observers: convergence properties}
\label{se4}

We now consider the issue of ensuring fast convergence of the estimates towards zero in the absence of uncertainties.
As may be noticed, the dynamics of the (nonnegative) errors $e_h^\pm = |I_h-I_h^\pm|$
%$e_h^+=I_h^+-I_h$ and $e_h^-=I_h-I_h^-$,
(see the 2nd lines of \eqref{eq15a}, \eqref{eq15b}) is not modified by the gain choice: the estimates of the proportion of infective hosts $I_h$ converge at a constant rate $\mu_h+\gamma$, which essentially depends upon the recovery rate $\gamma$, as $\mu_h\ll\gamma$.
%One therefore concentrates on the estimation of the remaining components.

Observe that removing the gains $k_S^\pm(t)$ and $k_v^\pm(t)$ yields converging estimates (see the matrices $\AOBSa, \AOBSb$ in \eqref{eq16a}, \eqref{eq16c}), with a convergence rate at most equal to the host birth/death rate $\mu_h$.
But since $\mu_h$ is very small, it is unsatisfactory for practical matters  to settle for such a slow speed of convergence.
We provide in Theorem \ref{MainTheorem} a way to cope with this issue.
% and to accelerate the convergence.
See Appendix C for a complete proof.

\begin{theorem}[Convergence property of the estimates]
\label{MainTheorem}
Assume that the initial conditions verify \eqref{eq8} given in Theorem {\rm\ref{th2}}.
Assume that, for {\em fixed positive} scalar numbers $ \omOBSa, \omOBSb,\epsOBSa,\epsOBSb,$ the gains $k_S^\pm(t), k_v^\pm(t)$ fulfill the assumptions of Theorem {\rm\ref{th2}} as well as the conditions:
\begin{subequations}
 \label{ell}
 \begin{gather}
 \label{ella}
 k_S^-(t)\beta_{vh}^+(t)  -\omOBSa k_v^+(t)\beta_{vh}^-(t) = \xiOBSa(t),\\
 \label{ellb}
 k_S^+(t)\beta_{vh}^-(t)  -\omOBSb k_v^-(t)\beta_{vh}^+(t) = \xiOBSb(t),
\end{gather}
\end{subequations}
where\footnote{If $I_v^\pm(t)=0$ in \eqref{xia}/\eqref{xib}, then the $\min$ selects the other term.}
\begin{subequations}
\label{xi}
\begin{gather}
\label{xia}
 \xiOBSa(t):=
  \min\left\{ \frac{\gamma-{\epsOBSa}}{I_v^+(t)}; \frac{\mu_v-\mu_h+\beta_{hv}^+(t)I_h^+(t)}{S_h^+(t)/ \omOBSa+I_v^+(t)}\right\},\\
\label{xib}
 \xiOBSb(t):= \min\left\{\frac{ \gamma-\epsOBSb}{I_v^-(t)};\frac{\mu_v-\mu_h+\beta_{hv}^-(t)I_h^-(t)}{S_h^+(t)/\omOBSb+I_v^-(t)}\right\}.
\end{gather}
\end{subequations}
Then, along any trajectories of \eqref{OBS1}, \eqref{OBS2} one has, for all $t\geq 0$,
\begin{subequations}
\label{Vestimates}
\begin{gather}
\label{Vestimatesa}
0\leq \VOBSa(t) \leq e^{ -\int_0^t\deltaOBSa(s) ds}\ \VOBSa(0) + \int_0^t e^{-\int_s^t\deltaOBSa(\tau) d\tau} \FOBSa(s) ds,\\
%  0\leq \VOBSa(t) \leq e^{-\deltaOBSa(t)}\VOBSa(0) + \int_0^t e^{-\int_s^t\deltaOBSa(\tau)d\tau} \FOBSa(s)ds,\\
\label{Vestimatesb}
 0\leq \VOBSb(t) \leq e^{ -\int_0^t\deltaOBSb(s) ds} \VOBSb(0) + \int_0^t e^{-\int_s^t\deltaOBSb(\tau) d\tau} \FOBSb(s) ds,
% 0\leq \VOBSb(t) \leq e^{-\deltaOBSb(t)}\VOBSb(0) + \int_0^t e^{-\int_s^t\deltaOBSb(\tau)d\tau} \FOBSb(s)ds,
 \end{gather}
 \end{subequations}
where 
\begin{subequations}
\label{Vdef}
\begin{gather}
\label{Vdefa}
 \VOBSa = (S_h - S_h^-) + \rhoOBSa (I_h^+ - I_h) + \omOBSa (I_v^+ - I_v),\\
\label{Vdefb}
 \VOBSb = (S_h^+ - S_h) + \rhoOBSb (I_h - I_h^-) + \omOBSb (I_v - I_v^-),
\end{gather}
\end{subequations}
are positive definite functions, 
\begin{subequations}
\label{rhoa}
\begin{gather}
\rhoOBSa:= {\omOBSa} \sup_{t\geq 0} \frac{\beta_{hv}^+(t)(1-I_v^-(t))}{\epsOBSa},\\
\label{rhob}
\rhoOBSb:= {\omOBSb} \sup_{t\geq 0} \frac{\beta_{hv}^-(t)(1-I_v^-(t))}{\epsOBSb}.
\end{gather}
\end{subequations}
\begin{subequations}
\label{delta}
\begin{gather}
\label{deltaa}
\deltaOBSa(t) := \mu_h+ \xiOBSa(t)I^+_v(t),\\
\label{deltab}
\deltaOBSb(t) := \mu_h+ \xiOBSb(t)I^-_v(t),
\end{gather}
\end{subequations}
and
\begin{subequations}
 \label{Fdef}
 \begin{eqnarray}
  \FOBSa
 & :=  &
 \nonumber
 \omOBSa \Big( k_v^+(\beta_{vh}^+-\beta_{vh}^-)S_hI_v+(\beta_{hv}^+-\beta_{hv}^-)I_h(1-I_v)\Big)\\
 & &
 \label{Fdefa}
+ k_S^-(\beta_{vh}^+-\beta_{vh}^-)S_hI_v\\
 \FOBSb
 & := &
 \nonumber
 \omOBSb \Big( k_v^-(\beta_{vh}^+-\beta_{vh}^-)S_hI_v+(\beta_{hv}^+-\beta_{hv}^-)I_h(1-I_v)\Big)\\
 & &
 \label{Fdefb}
 + k_S^+(\beta_{vh}^+-\beta_{vh}^-)S_hI_v
  \end{eqnarray}
\end{subequations}
\hfill$\square$
\end{theorem}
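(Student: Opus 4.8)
The plan is to recognize both linear Lyapunov functions as inner products against a fixed \emph{positive} vector and thereby reduce each estimate in \eqref{Vestimates} to the integration of a scalar linear differential inequality. Writing $c\t := (1,\rhoOBSa,\omOBSa)$, one has $\VOBSa = c\t \XOBSa$; by Lemma \ref{le3} together with the nonnegativity $\XOBSa(t)\geq 0$ granted by Theorem \ref{th2} (whose hypotheses are assumed), the lower bound $\VOBSa\geq 0$ is immediate since $c>0$, and positive definiteness follows from the strict positivity of the coefficients. The same reasoning applies to $\VOBSb$ with $(1,\rhoOBSb,\omOBSb)$. It remains to establish the upper bounds.

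Differentiating along \eqref{eq15a} gives $\dVOBSa = (c\t\AOBSa)\XOBSa + c\t\bOBSa$. The core of the proof is to verify the \emph{common linear Lyapunov inequality} $c\t\AOBSa(t)\leq -\deltaOBSa(t)\,c\t$ componentwise, for every $t\geq 0$, and I would check this column by column. The first column reproduces $-\deltaOBSa$ exactly, by the gain constraint \eqref{ella} and the definition \eqref{deltaa}. The second column requires $\omOBSa\beta_{hv}^+(1-I_v)\leq \rhoOBSa(\gamma-\xiOBSa I_v^+)$; using $\xiOBSa I_v^+\leq \gamma-\epsOBSa$ from the first term of \eqref{xia} and the ordering $I_v^-\leq I_v$ from Theorem \ref{th2}, this follows from the choice of $\rhoOBSa$ in \eqref{rhoa}, which is precisely calibrated so that $\rhoOBSa\epsOBSa \geq \omOBSa\sup_t \beta_{hv}^+(1-I_v^-)$. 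The third column reduces, after substituting \eqref{ella} and \eqref{deltaa}, to $\xiOBSa \leq (\mu_v-\mu_h+\beta_{hv}^+ I_h^+)/(S_h/\omOBSa+I_v^+)$, which holds because the second term of \eqref{xia} uses $S_h^+\geq S_h$ in its denominator, yielding a smaller (hence safe) upper bound for $\xiOBSa$.

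Since $\XOBSa\geq 0$, the inequality just established gives $(c\t\AOBSa)\XOBSa \leq -\deltaOBSa\,\VOBSa$. For the affine term I would bound $c\t\bOBSa \leq \FOBSa$: the uncertainty factors appearing in $\bOBSa$ satisfy $\beta_{vh}^+-\beta_{vh}\leq \beta_{vh}^+-\beta_{vh}^-$, $\beta_{vh}-\beta_{vh}^-\leq \beta_{vh}^+-\beta_{vh}^-$ and $\beta_{hv}^+-\beta_{hv}\leq \beta_{hv}^+-\beta_{hv}^-$, while the accompanying products $S_hI_v$ and $I_h(1-I_v)$ are nonnegative by Lemma \ref{le1} and the gains are nonnegative; comparing term by term with \eqref{Fdefa} yields the claim. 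Combining the two bounds produces the scalar differential inequality $\dVOBSa \leq -\deltaOBSa\VOBSa + \FOBSa$.

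The conclusion then follows by integrating this inequality with the factor $e^{\int_0^t\deltaOBSa(s)\,ds}$: one checks that $\frac{d}{dt}\big(e^{\int_0^t\deltaOBSa}\VOBSa\big)\leq e^{\int_0^t\deltaOBSa}\FOBSa$ and integrates from $0$ to $t$ to obtain \eqref{Vestimatesa}. The argument for \eqref{Vestimatesb} is entirely symmetric, using $\AOBSb,\bOBSb,\xiOBSb,\rhoOBSb$ and the corresponding orderings. I expect the main obstacle to be the column-by-column verification of the Lyapunov inequality, and in particular keeping track of which quantity (true state, upper estimate, or lower estimate) must appear in each bound so that the three defining relations \eqref{ella}, \eqref{xia}, \eqref{rhoa} combine consistently; the ordering relations from Theorem \ref{th2} are exactly what make these substitutions valid.
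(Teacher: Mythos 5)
Your proposal is correct and follows essentially the same route as the paper's proof: the same positive vector $(1,\rho,\omega)$, the same column-by-column verification of $u\t(A+\delta I)\le 0$ using \eqref{ella}, \eqref{xia}, \eqref{rhoa} together with the orderings $I_v^-\le I_v$ and $S_h\le S_h^+$ from Theorem \ref{th2}, the same bound $u\t b\le F$, and the same Gronwall/integrating-factor conclusion. The only difference is that you spell out the justification of $u\t b\le F$, which the paper merely asserts.
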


It is easy to verify that it is always possible to find gain values that fulfill (pointwise) \eqref{ell}-\eqref{xi}.
When all assumptions of Theorem \ref{MainTheorem} are satisfied,
 %It is easy to verify that it is always possible to find gain values that fulfill (pointwise) \eqref{ell}-\eqref{xi}.
%When the assumptions of Theorem \ref{MainTheorem} are}
%{\color{blue} \footnote{\color{blue} instantaneously? Qu'est-ce que tu veux dire ? `Pointwise?'} 
%assumptions \eqref{ell}-\eqref{xi} of Theorem \ref{MainTheorem} above.
%When the latter are satisfied,}
inequalities \eqref{Vestimates} provide guaranteed bounds on the error estimates.
In the absence of uncertainties, $\beta_{vh}^+\equiv\beta_{vh}^-$, $\beta_{hv}^+\equiv\beta_{hv}^-$, and the functions in \eqref{Fdef} are identically null (see Lemma \ref{le3}):
only the first terms remain in the right-hand sides of \eqref{Vestimates}, and the errors converge towards zero exponentially.

The speed of convergence is dictated by the values of the {\em positive} (due to Lemma \ref{le2}) instantaneous convergence rates $\deltaOBSa(t), \deltaOBSb(t)$ given in formulas \eqref{delta}.
An important point is that convergence may occur with quite a slow pace: when the estimates $I_v^\pm$ on the infectives $I_v$ are small, it occurs at the natural rate $\mu_h$ of system \eqref{SII}.
On the contrary, its speed increases with the value of $I_v^\pm$: the observers take advantage of the epidemic bursts to provide tighter estimates more rapidly.
By construction $\xiOBSa(t), \xiOBSb(t) \leq \gamma$, so $\deltaOBSa(t), \deltaOBSb(t)$ are at most equal to $\mu_h+\gamma$: the interest of the gain choice made in \eqref{ell}-\eqref{xi} is to strive for this performance.
%The system is unobservable in absence of infected: in such a case the observers try to speed up the estimate convergence during the epidemic bursts.
%The same phenomenon appears when the estimates $I_v^\pm$ vanish.

\begin{remark}[On noise and unmodeled dynamics]
Noise in measurement and unmodeled dynamics appearing additively in the right-hand sides of system \eqref{SII} can be handled without major difficulty (this is omitted here for sake of space).
When the latter are upper and lower bounded by some known signals, introducing {\em ad hoc} linear expressions of the latter in the right-hand sides of \eqref{OBS1}, \eqref{OBS2} and exploiting the linearity of the Lyapunov functions allows to obtain formulas analogous to \eqref{Vestimates}, with some additional terms in the expression of the functions $\FOBSa$, $\FOBSb$.
\hfill$\square$
\end{remark}

\section{Numerical simulations}
\label{se55}

The model parameters used for this test are listed in Table \ref{parametros}.
In order to represent seasonal variations, the transmission rates were taken periodic,  $\beta_{vh}(t) = \beta_{vh,0}(1+0.4\cos(2\pi t))$, $\beta_{hv}(t) = \beta_{hv,0}(1+0.4\cos(2\pi t))$, and the uncertainties were defined by $\beta_{vh}^\pm(t) = (1\pm 0.1)\beta_{vh}(t)$, $\beta_{hv}^\pm(t) = (1\pm 0.1)\beta_{hv}(t)$.
The gains were chosen, in accordance with Theorem \ref{MainTheorem}, as
\begin{subequations}
\begin{gather}
 k_S^-(t) = \frac{\xiOBSa(t)}{\beta_{vh}^+(t)}
\text{ if } S_h^-(t) \geq \varepsilon_2,\\
k_S^-(t) = \max\left\{
\frac{\xiOBSa(t)}{\beta_{vh}^+(t)};
1-\frac{\mu_h-\varepsilon_1}{y(t)}
\right\} \text{ otherwise},\\
 k_v^+(t)= \frac{k_S^-(t) \beta_{vh}^+(t)-\xiOBSa(t)}{\omOBSa\beta_{vh}^-(t)}\\
k_S^+(t) = \frac{\xiOBSb(t)}{\beta_{vh}^-(t)},\qquad
 k_v^-(t)= 0,
\end{gather}
\end{subequations}
with $\omOBSa=\omOBSb=10^5$, $\epsOBSa=\epsOBSb=10^{-4}$, $\varepsilon_1=\varepsilon_2=10^{-5}$.
Last, the initial conditions were fixed at 
$S_h(0)=0.2, S_h^-(0)=0.1, S_h^+(0)=0.8$,
$I_h(0)=0, I_h^-(0)=0, I_h^+(0)=0.01$,
$I_v(0)=0.005, I_v^-(0)=0, I_v^+(0)=0.01$.

% OBS0 = [0.2;0;0.005;0.1;0.01;0.01;0.8;0;0] // initial conditions of  (S,Ih,Iv,Sh-,Ih+,Iv+,Sh+,Ih-,Iv-)

Figure \ref{figIv} represents respectively the corresponding evolution of $S_h, I_h, I_v$ (in blue) and their lower (in red) and upper (in green) estimates with respect to time (expressed in {\em years}).
Fast convergence of $I_h^\pm$ and $I_v^\pm$ towards $I_h$ and $I_v$ respectively is apparent, with quite small residual errors.
On the contrary, after a quick decrease, errors on the estimates of $S_h$ stay at relative value of about 20\%, due to the uncertainty on the transmission rates.

\begin{table}
\begin{center}
\begin{tabular}{|c|r|l|}
\hline
 \multicolumn{1}{|c|}{Parameter} & \multicolumn{1}{|c|}{Value} & \multicolumn{1}{|c|}{References}\\
\hline\hline
$\mu_h$  & $3.4\times 10^{-5}$ \ day$^{-1}$ & Massad et al. \cite{MassadCoutinho2010}  \\
$\beta_{vh,0}$ & $0.2102$  \ day$^{-1}$  & Massad et al. \cite{MassadCoutinho2010}  \\
$\gamma$ & $0.14$  \ day$^{-1}$  & Massad et al. \cite{MassadCoutinho2010}  \\
$\beta_{hv,0}$ & $0.1$  \ day$^{-1}$  & Massad et al. \cite{MassadCoutinho2010}  \\
$\mu_v$ & $0.025$  \ day$^{-1}$  & Massad et al. \cite{MassadCoutinho2010} \\
\hline
\end{tabular}
\caption{
List of parameter values 
}
\label{parametros}
\end{center}
\end{table}

\begin{figure}
    \centering
        \includegraphics[width=0.4\textwidth]{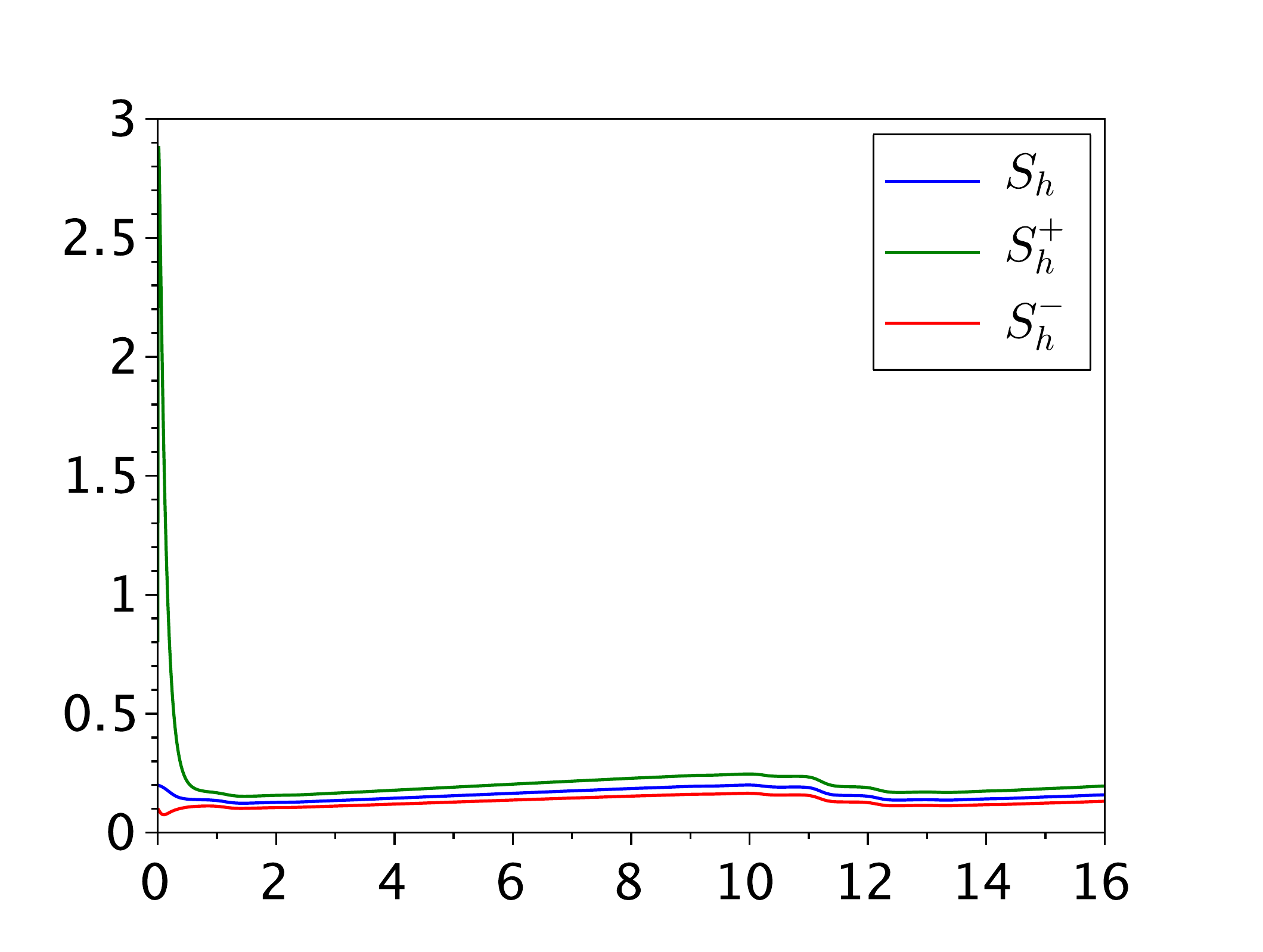}
         \includegraphics[width=0.4\textwidth]{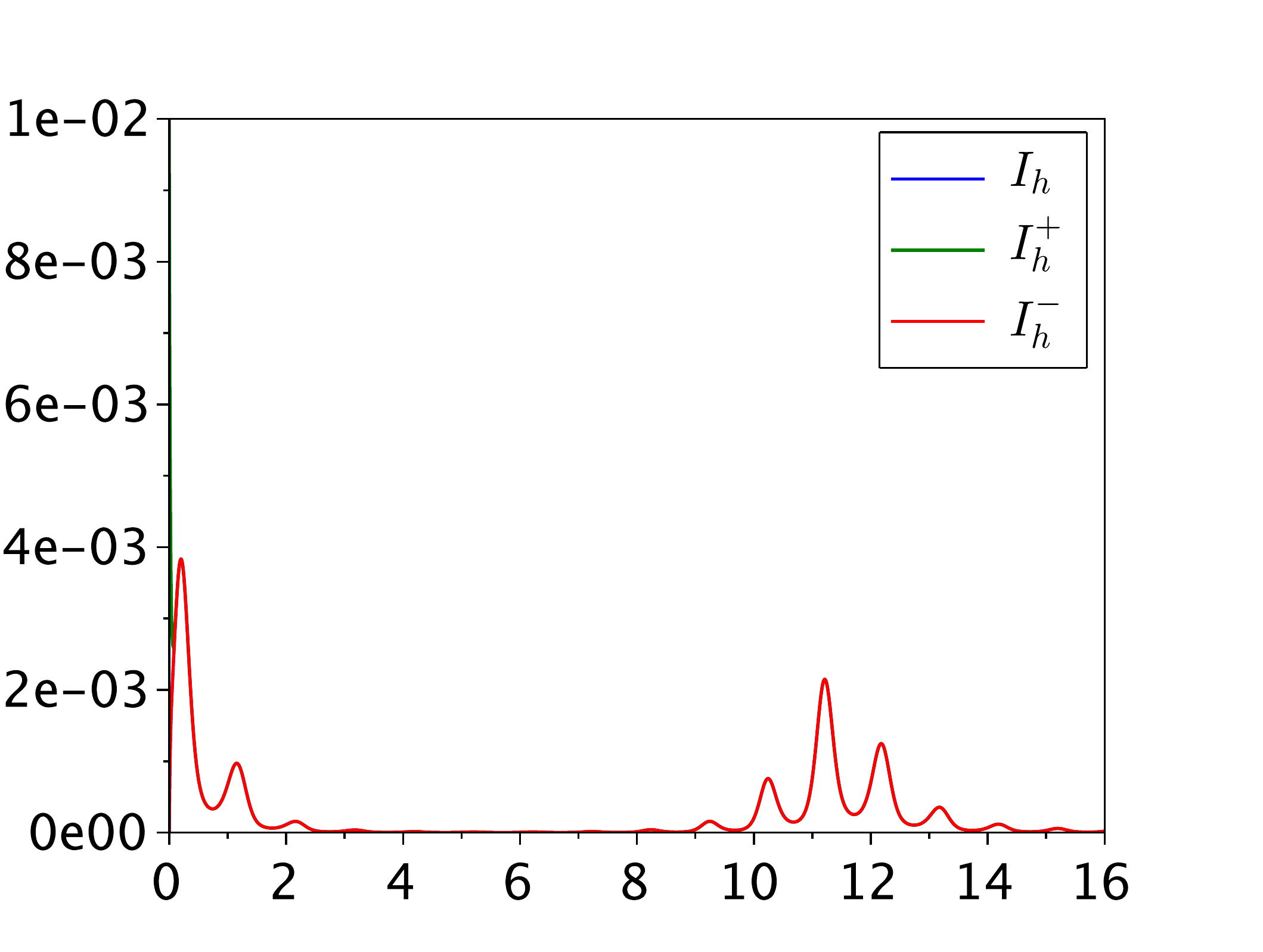}
        \includegraphics[width=0.4\textwidth]{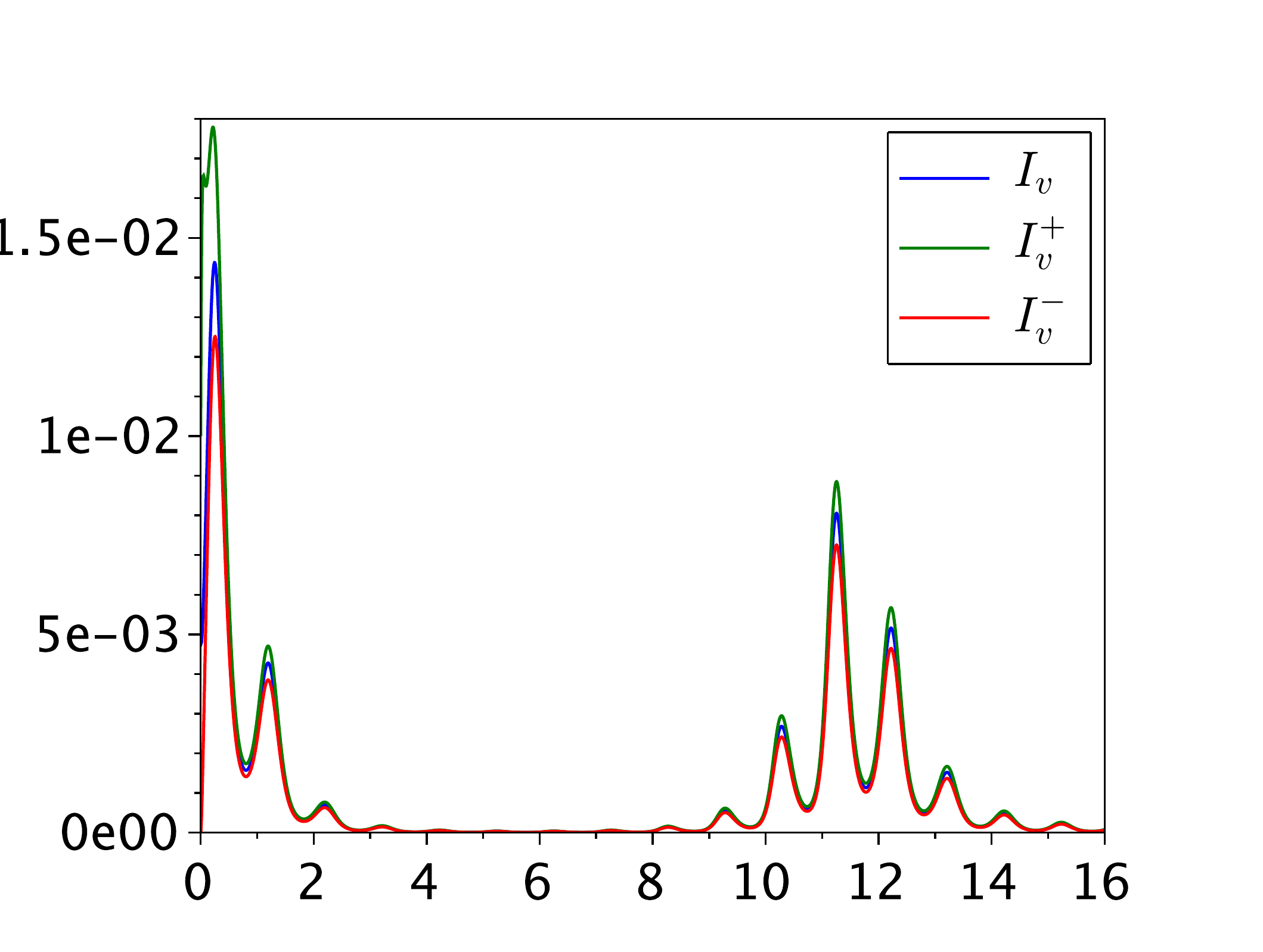}
        \caption{Proportions of susceptible hosts (top), infected hosts (middle) and infected vectors (bottom), and their estimates as functions of time (in {\em years}).
        The estimates of infected hosts become rapidly indistinguishable.}
        \label{figIv}
\end{figure}

\if{
\begin{figure}
    \centering
        \includegraphics[width=0.4\textwidth]{Tests/Ih.pdf}
        \caption{Infected human population and estimates}
        \label{figIh}
\end{figure}

\begin{figure}
    \centering
        \includegraphics[width=0.4\textwidth]{Tests/Iv.pdf}
        \caption{Infected vector population and estimates}
        \label{figIv}
\end{figure}
}\fi

\section{Conclusion}
\label{se5}

A class of interval observers has been provided for an SIR-SI model of vector-borne disease with time-varying uncertain transmission rates, assuming continuous measurement of the host incidence.
The system is unobservable in absence of infectives, and the proposed gain scheduling law speeds up the state estimation during the epidemic bursts.
Explicit bounds on the estimation errors have been given, which vanish asymptotically in absence of uncertainties.
Numerical test has been provided, demonstrating the expected behavior.
The analysis may be extended directly in presence of noise in measurement and unmodeled dynamics.

\appendix

%\section{Proofs}

\subsection{Proof of Lemma \ref{le1}}

Assume that $S_h(0)\geq 0,I_h(0)\geq 0,I_v(0)\geq 0.$
Observe that whenever $S_h=0,$ one has $\dot{S}_h=\mu_h>0.$ Hence, $S_h(t)\geq 0.$ Let us rewrite \eqref{dotIh}-\eqref{dotIv} in the more convenient form
\begin{equation}\label{IhIv}
 \begin{pmatrix}
  \dot I_h \\ \dot I_v
 \end{pmatrix}
 =
 \begin{pmatrix}
  -(\mu_h+\gamma) &  \beta_{vh}(t) S_h(t) \\
  \beta_{hv}(t) & -(\beta_{hv}(t)+\mu_v)
 \end{pmatrix}
 \begin{pmatrix}
  I_h \\ I_v
 \end{pmatrix}
\end{equation}
Since $S_h(t)$ is always nonnegative, system \eqref{IhIv} above turns out to be monotone \cite{Hirsch:1988aa,Smith:1996aa} and, consequently, $I_h(t)\geq0, I_v(t)\geq0$ whenever starting from nonnegative initial values.

On the other hand, if $S_h(t)+I_h(t)=1$ then $\frac{d}{dt}(S_h+I_h) = -\gamma I_h \leq 0$.
Also, whenever $I_v=1$, one has $\dot I_v = -\mu_v< 0.$ Hence, $S_h+I_h$ and $I_v$ remain under 1 if their initial values are located under 1.

In order to prove the same properties for the strict inequalities, first integrate \eqref{SII} to obtain, for any $t\geq 0$:
$S_h(t) = S_h(0)  e^{-\int_0^t (\mu_h+\beta_{vh}(s)I_v(s))  ds} + \mu_h \int_0^t e^{-\int_s^t (\mu_h+\beta_{vh}(\tau)I_v(\tau)) d\tau}ds$;
$I_h(t) = I_h(0) e^{-\int_0^t (\mu_h+\gamma)ds} +  \int_0^t e^{-(\mu_h+\gamma)(t-s)}\beta_{vh}(s)S_h(s)I_v(s)ds$ and
$I_v(t) = I_v(0) e^{-\int_0^t(\mu_v+\beta_{hv}(s)I_h(s))ds} + \int_0^t e^{-\int_s^t(\mu_v+\beta_{hv}(\tau)I_h(\tau))d\tau} \beta_{hv}(s)I_h(s)ds$.
From this we can easily deduce that all three variables remain strictly positive if starting from strictly positive initial values, using the previously proved nonnegativity of the three components.

Finally, note that,  for any $t\geq 0$:
$(1-S_h(t)-I_h(t)) = (1-S_h(0)-I_h(0))e^{-\int_0^t \mu_h ds} +  \int_0^t e^{-\int_s^t \mu_h  d\tau}\gamma I_h(s)ds$ and $(1-I_v(t)) = (1-I_v(0))e^{-\int_0^t \beta_{hv}(s)I_h(s) ds} +  \int_0^t e^{-\int_s^t \beta_{hv}(\tau)I_h(\tau)  d\tau} \mu_v I_v(s)ds$.
Thus  $S_h(0)+I_h(0)<1$ implies the same property for all $t>0$.
The same holds for $I_v.$
This ends the proof of Lemma \ref{le1}. 
\hfill$\blacksquare$

\subsection{Proof of Lemma \ref{le3}}

Let us first establish \eqref{eq15a}.
From \eqref{SII}, \eqref{OBS1} and the fact that $y=\beta_{vh}S_hI_v$, we get:
\begin{eqnarray}
\dot{e}_S^-
& = &
\nonumber
[\mu_h - \beta_{vh} S_h I_v - \mu_h S_h] \\
& &
\nonumber
- [\mu_h (1 - {S_h^-}) - y + k_S^- (y - \beta_{vh}^+ S_h^- I_v^+)] \\
& = &
\nonumber
\mu_h (S_h^- - S_h) - k_S^- (y - \beta_{vh}^+S_h^-I_v^+)\\
& = &
\label{doteS}
-\mu_h e_S^- -k_S^- (y - \beta_{vh}^+ S_h^- I_v^+)\\
\dot{e}_h^+
& = &
\nonumber
[y - (\mu_h + \gamma) I_h^+ ] - [\beta_{vh} S_h I_v - (\mu_h + \gamma) I_h] \\
& = &
\label{doteh}
- (\mu_h + \gamma)(I_h^+ - I_h) 
= -(\mu_h + \gamma) e_h ^+\\
%\label{dotev}
\dot{e}_v^+
& = &
\nonumber
[{\beta_{hv}}^+ (1 - I_v^+) I_h^+ - \mu_v I_v^+ +  k_{v}^+ (y - {\beta_{vh}}^- S_h^- I_v^+)]\\
& &
\nonumber
- [\beta_{hv} (1 - I_v) I_h - \mu_v I_v]  \\
& = &
\nonumber
[{\beta}_{hv}^+ I_h^+ - {\beta}_{hv}^+ I_v^+ I_h^+ - \mu_v I_v^+ + k_v^+ (y - {\beta_{vh}}^- S_h^- I_v^+)]\\
& &
\nonumber
- [\beta_{hv} I_h - \beta_{hv} I_v I_h  - \mu_v I_v]
\end{eqnarray}
By adding and substracting $({\beta_{hv}}^+ I_h + {\beta}_{hv}^+ I_v I_h + {\beta}_{hv}^+ I_v I_h^+)$ on the right-hand side of the previous formula, we get
%\eqref{dotev}, we get
\begin{eqnarray}
\dot{e}_v^+
&= &
\nonumber
- (\beta_{hv} - \beta_{hv}^+) I_h - \beta_{hv}^+ (I_h - I_h^+) + (\beta_{hv} - \beta_{hv}^+) I_v I_h \\
& &
\nonumber
+ \beta_{hv}^+ I_v (I_h - I_h^+)
+ \beta_{hv}^+ (I_v - I_v^+) I_h^+ \\
& &
\nonumber
- \mu_v e_v^+ + k_v^+ (y - \beta_{vh}^- S_h^- I_v^+) \\
&= &
\nonumber
(\beta_{hv}^+ - \beta_{hv}) I_h + \beta_{hv}^+ e_h^+ - (\beta_{hv}^+ - \beta_{hv}) I_v I_h - \beta_{hv}^+ I_v e_h^+ \\
& &
\label{dotev2}
- \beta_{hv}^+ e_v^+ I_h^+ - \mu_v e_v^+ + k_v^+ (y - \beta_{vh}^- S_h^- I_v^+)
\end{eqnarray}

We now treat the terms $y - \beta_{vh}^+ S_h^- I_v^+$ in \eqref{doteS} and $y - \beta_{vh}^- S_h^- I_v^+$ in \eqref{dotev2}.
Since $y - {\beta_{vh}}^+ S_h^- I_v^+ = \beta_{vh} S_h I_v - \beta_{vh}^+ S_h^- I_v^+$, %one has
\begin{subequations}
\begin{equation}\label{yminus}
y - {\beta_{vh}}^+ S_h^- I_v^+ = (\beta_{vh} - \beta_{vh}^+) S_h I_v - {\beta_{vh}}^+ S_h e_v^+ +  {\beta_{vh}}^+ e_{S}^- I_v^+
\end{equation}
and similarly
\begin{equation}\label{yminuss}
y - {\beta_{vh}}^- S_h^- I_v^+ = (\beta_{vh} - \beta_{vh}^-) S_h I_v - {\beta_{vh}}^- S_h e_v^+ +  {\beta_{vh}}^- e_{S}^- I_v^+
\end{equation}
\end{subequations}
Insering \eqref{yminus} in \eqref{doteS} %, \eqref{doteh}
and \eqref{yminuss} in \eqref{dotev2}, we get
%\begin{subequations}
\begin{eqnarray*}
\dot{e}_S^-
& = &
-\mu_h e_S^- - k_S^- ((\beta_{vh} - {\beta}_{vh}^+) S_h I_v\\
& &
- {\beta_{vh}}^+ S_h e_v^+ +  {\beta_{vh}}^+ e_S^- I_v^+)\\
%\dot{e}_h = & -(\mu_h + \gamma) e_h,\\
\dot{e}_v^+
& = & (\beta_{hv}^+ - {\beta}_{hv}) I_h + {\beta}_{hv}^+ e_h^+ - (\beta_{hv}^+ - {\beta}_{hv}) I_v I_h - {\beta}_{hv}^+ I_v e_h^+ \\
& &
- {\beta}_{hv}^+ e_v^+ I_h^+  - \mu_v e_v^+ \\
& &
+ k_v^+ ((\beta_{vh} - {\beta}_{vh}^-) S_h I_v - \beta_{vh}^- S_h e_v^+ + \beta_{vh}^- e_S^- I_v^+) 
\end{eqnarray*}
%\end{subequations}
which finally yields \eqref{eq15a}, together with \eqref{eq16a}.
The proof is the same for system \eqref{eq15b}.

Last, the fact that the matrices $\AOBSa(t), \AOBSb(t)$ are Metzler, and that the vectors $\bOBSa(t), \bOBSb(t)$ are nonnegative and null in the absence of uncertainties, comes directly from the formulas previously proved and the estimates in Lemma \ref{le1} and \ref{le2}.
This achieves the proof of Lemma \ref{le3}.
\hfill$\blacksquare$

\subsection{Proof of Theorem \ref{MainTheorem}}

 We demonstrate here the claimed property for $\VOBSa$, the case of $\VOBSb$, being analogous, will not be treated.
 {\em Throughout this proof we remove the index \eqref{OBS1} from the variables and parameters, in order to simplify the notation.}
 
Defining the vector $u:=\begin{pmatrix} 1 & \rho &\omega \end{pmatrix}\t$, the (state) function $V(t)$ defined in \eqref{Vdefa} writes $V(t) = u\t X(t)$ (recall that the error vector $X$ has been given in \eqref{eq22a}).
Therefore, writing as usual $\dot V$ the derivative of $V$ along the trajectories, one has, due to formula \eqref{eq15a} in Lemma \ref{le3},
\begin{equation}
\label{eq24}
\dot{V}(t)+ \delta (t) V (t)
= u\t (A(t)+\delta(t) I) X(t) + u\t b(t)
\end{equation}
 %Throughout the proof we remove the subindex of the variables and parameters in order to simplify the notation.
where $I$ denotes de identity matrix.
Now notice that, with $F$ defined in \eqref{Fdefa}, one has $u\t b(t) \leq F(t)$.
We show next that
\begin{equation}
\label{ThmIneq}
u\t (A(t)+\delta(t) I)\leq 0
%\uOBSb\t (\AOBSb(t)+\deltaOBSb(t) I)\leq 0,\qquad
\end{equation}
Given these two facts, one may deduce from \eqref{eq24} that
\[
\dot V(t) + \delta (t) V(t) \leq F(t)
\]
which gives \eqref{Vestimatesa} by Gronwall's lemma and thus achieves the proof.
It therefore remains to show \eqref{ThmIneq} in order to complete the proof of Theorem \ref{MainTheorem}.

Let us show that \eqref{ThmIneq} holds when the gains are chosen as prescribed in the statement.
Equation \eqref{ThmIneq} can be written as the following system of inequalities, valid for any $t\geq 0$:
\begin{subequations}\label{Thmdelta}
\hspace{-.2cm}
  \begin{align}
\hspace{-.2cm}
   \delta(t) &\leq \mu_h+k_S^-(t)\beta_{vh}^+(t) I_v^+(t) -\omega k_v^+(t)\beta_{vh}^-(t) I_v^+(t),\\
\hspace{-.2cm}
   \label{Thmdelta2}\delta(t) &\leq \mu_h+\gamma-\frac{\omega}{\rho} \beta_{hv}^+(t)(1-I_v(t)),\\
\hspace{-.4cm}
   \delta(t) &\leq \mu_v + k_v^+(t)\beta_{vh}^-(t)S_h(t) +\beta_{hv}^+(t)I_h^+(t)-\frac{k_S^-(t)}{\omega}\beta_{vh}^+(t) S_h(t).
  \end{align}
  \end{subequations}

Using \eqref{ella} and \eqref{xia} in \eqref{Thmdelta}, yields  the following equivalent set of inequalities:
 \begin{subequations}\label{Thmxi}
  \begin{align}
  \label{Thmxi1} \xi(t) I^+_v(t) & \leq {I_v^+(t)}\xi(t),\\
  \label{Thmxi2} \xi(t) I^+_v(t)  &\leq \gamma-\frac{\omega}{\rho} \beta_{hv}^+(t)(1-I_v(t)),\\
   \label{Thmxi3} \xi(t) I^+_v(t)  &\leq {\mu_v-\mu_h} - \frac{\xi(t) {S_h}(t)}{\omega} + {\beta_{hv}^+(t)I_h^+(t)}.
  \end{align}
  \end{subequations}
  Observe that \eqref{Thmxi1} is trivially verified.
  Also, note that 
  $$
  \xi I_v^+ \leq \displaystyle {\gamma-\varepsilon}\leq \gamma-\varepsilon\frac{\beta_{hv}^+(1-I_v)}{\beta_{hv}^+(1-I_v^-)},
  $$ 
  which yields (in view of  \eqref{rhoa}) inequality   \eqref{Thmxi2}.
 \if{ \begin{equation}
  \xi I_v^+ \leq \gamma-\varepsilon\frac{\beta_{hv}^+(1-I_v)}{\beta_{hv}^+(1-I_v^-)} \leq \gamma- \frac{\omega}{\rho} {\beta_{hv}^+(1-I_v)}  ,
  \end{equation} }\fi

Last, by the definition of $\xi(t)$ given in \eqref{xia}, for any $t\geq 0$,
  $$
  \xi(t)\leq \frac{\mu_v-\mu_h+\beta_{hv}^+(t)I_h^+(t)}{S_h^+(t)/\omega+I_v^+(t)},
  $$
  which implies  \eqref{Thmxi3} since $S_h^+(t)\geq S_h(t).$ 
   
   The inequalities in \eqref{ThmIneq} are thus verified for the chosen values of the parameters. This completes the proof of Theorem \ref{MainTheorem}.
\hfill$\blacksquare$

  \begin{remark}
Setting in \eqref{Thmdelta} the gains $k_S^-(t)$ and $k_v^+(t)$ to zero and choosing $\displaystyle\frac{\omega}{\rho}$ small enough, $\delta(t)$ has values at least equal to $\mu_h$ (due to the fact that $\mu_v>\mu_h$).
  Therefore it is always possible to ensure that the convergence speed verifies $\mu_h\leq \delta(t)$.
On the other hand, since the last term in the right-hand side of \eqref{Thmdelta2} is nonpositive, the value of $\delta(t)$ in \eqref{deltaa} necessarily satisfies $\delta(t) < \mu_h + \gamma$ for any $t\geq 0$.
\end{remark}

\bibliographystyle{plain}
\bibliography{ecc2018}

\begin{thebibliography}{10}

\bibitem{Andraud:2017aa}
M~Andraud, N~Hens, C~Marais, and P~Beutels.
\newblock Dynamic epidemiological models for dengue transmission: A systematic
  review of structural approaches.
\newblock {\em Plos One}, 11, November 2017.

\bibitem{Berman:1994aa}
A~Berman and RJ~Plemmons.
\newblock {\em Nonnegative matrices in the mathematical sciences}.
\newblock SIAM, 1994.

\bibitem{Bliman:2017aa}
PA~Bliman and B~D'Avila Barros.
\newblock Interval observers for {SIR} epidemic models subject to uncertain
  seasonality.
\newblock In {Filippo Cacace et al.}, editor, {\em Positive Systems}, volume
  471 of {\em Lecture Notes in Control and Information Sciences}. Springer,
  2017.

\bibitem{Dietz:1975aa}
K~Dietz.
\newblock Transmission and control of arbovirus diseases.
\newblock {\em Epidemiology}, pages 104--121, 1975.

\bibitem{Efimov:2016aa}
D~Efimov and T~Ra{\"\i}ssi.
\newblock Design of interval observers for uncertain dynamical systems.
\newblock {\em Automation and Remote Control}, 77(2):191--225, 2016.

\bibitem{Efimov:2013aa}
D~Efimov, T~Ra{\"\i}ssi, S~Chebotarev, and A~Zolghadri.
\newblock Interval state observer for nonlinear time varying systems.
\newblock {\em Automatica}, 49(1):200--205, 2013.

\bibitem{Esteva:1998aa}
L~Esteva and C~Vargas.
\newblock Analysis of a dengue disease transmission model.
\newblock {\em Mathematical biosciences}, 150(2):131--151, 1998.

\bibitem{Farina:2011aa}
L~Farina and S~Rinaldi.
\newblock {\em Positive linear systems: theory and applications}, volume~50.
\newblock John Wiley \& Sons, 2011.

\bibitem{Gouze:2000aa}
J-L Gouz{\'e}, A~Rapaport, and MZ~Hadj-Sadok.
\newblock Interval observers for uncertain biological systems.
\newblock {\em Ecological modelling}, 133(1):45--56, 2000.

\bibitem{Hirsch:1988aa}
MW~Hirsch.
\newblock Stability and convergence in strongly monotone dynamical systems.
\newblock {\em J. reine angew. Math}, 383:1--53, 1988.

\bibitem{Keeling:2008aa}
MJ~Keeling and P~Rohani.
\newblock {\em Modeling infectious diseases in humans and animals}.
\newblock Princeton University Press, 2008.

\bibitem{Mason:2007aa}
O~Mason and R~Shorten.
\newblock Quadratic and copositive {Lyapunov} functions and the stability of
  positive switched linear systems.
\newblock In {\em American Control Conference, 2007. ACC'07}, pages 657--662.
  IEEE, 2007.

\bibitem{MassadCoutinho2010}
E~Massad, FAB Coutinho, MN~Burattini, and M~Amaku.
\newblock Estimation of ${R}_0$ from the initial phase of an outbreak of a
  vector-borne infection.
\newblock {\em Tropical medicine \& international health}, 15(1):120--126,
  2010.

\bibitem{Moisan:2009aa}
M~Moisan, O~Bernard, and J-L Gouz{\'e}.
\newblock Near optimal interval observers bundle for uncertain bioreactors.
\newblock {\em Automatica}, 45(1):291--295, 2009.

\bibitem{Raissi:2012aa}
T~Ra{\"\i}ssi, D~Efimov, and A~Zolghadri.
\newblock Interval state estimation for a class of nonlinear systems.
\newblock {\em IEEE Transactions on Automatic Control}, 57(1):260--265, 2012.

\bibitem{Smith:1996aa}
HL~Smith.
\newblock Monotone dynamical systems: an introduction to the theory of
  competitive and cooperative systems.
\newblock {\em Bulletin (New Series) of the American Mathematical Society},
  33:203--209, 1996.

\bibitem{Tami:2014aa}
R~Tami, D~Boutat, G~Zheng, and F~Kratz.
\newblock Parameters and states estimation for dengue epidemic model.
\newblock In {\em 2014 European Control Conference (ECC)}, pages 528--533.
  IEEE, 2014.

\bibitem{Organization:2016aa}
{World Health Organization}.
\newblock Vector-borne diseases, February 2016.
\newblock http://www.who.int/mediacentre/factsheets/fs387/en/.

\end{thebibliography}

\end{document}